\newcommand*{\Scale}[2][4]{\scalebox{#1}{$#2$}}
\newtheorem{remark}{Remark}
\newtheorem{theorem}{Theorem}
\newtheorem{lemma}{Lemma}
\pgfplotsset{compat=newest} 
\pgfplotsset{plot coordinates/math parser=false}
\def\h{2.8}
\def\l{7.5}
\begin{document}
%
\title{\LARGE \bf Modelling the Effect of Vaccination and Human Behaviour on the Spread of Epidemic Diseases on Temporal Networks}
%
%
%

\author{Kathinka Frieswijk, Lorenzo Zino, and Ming Cao
\thanks{The authors are with the Faculty of Science and Engineering, University of Groningen, The Netherlands, 9747 AG Groningen, e-mail:
       \href{mailto:k.frieswijk@rug.nl}{\texttt{
      $\left\{\text{k.frieswijk,lorenzo.zino,m.cao}\right\}$@rug.nl}}. The work was partially supported by the European Research Council (ERC--CoG--771687).}%
}

\maketitle

\begin{abstract}
\noindent Motivated by the increasing number of COVID-19 cases that have been observed in many countries after the vaccination and relaxation of non-pharmaceutical interventions, we propose a mathematical model on time-varying networks for the spread of recurrent epidemic diseases in a partially vaccinated population. The model encapsulates several realistic features, such as the different effectiveness of the vaccine against transmission and development of severe symptoms, testing practices, the possible implementation of non-pharmaceutical interventions to reduce the transmission, isolation of detected individuals, and human behaviour. Using a mean-field approach, we analytically derive the epidemic threshold of the model and, if the system is above such a threshold, we compute the epidemic prevalence at the endemic equilibrium. These theoretical results show that precautious human behaviour and effective testing practices are key toward avoiding epidemic outbreaks. Interestingly, we found that, in many realistic scenarios, vaccination is successful in mitigating the outbreak by reducing the prevalence of seriously ill patients, but it could be a double-edged sword, whereby in some cases it might favour resurgent outbreaks, calling for higher testing rates, more cautiousness and responsibility among the population, or the reintroduction of non-pharmaceutical interventions to achieve complete eradication.
\end{abstract}

%
\IEEEpeerreviewmaketitle

\section{Introduction}
%
%
%
%
\IEEEPARstart{T}{he} COVID-19 (SARS-CoV-2) pandemic shook the world to its core, spreading quickly, while leaving death, economical distress and despair in its path \cite{who}. To control the spread of the disease, unrivalled efforts have been directed towards the development of effective vaccines \cite{whovaccintracker}. Although COVID-19 vaccines offer protection against the development of severe symptoms and transmission, reinfection with COVID-19 has been documented \cite{european2021risk,dagan2021bnt162b2,cavanaugh2021reduced,townsend2021durability}. Hence, it is unclear whether a high vaccination coverage alone is sufficient to eradicate the global pandemic, as massive outbreaks have been recently registered even in countries with high vaccination rates~\cite{who}, as soon as non-pharmaceutical interventions (NPIs) were relaxed.

To gain insight into the mechanics behind the spread of infections, it is standard practice within the scientific community to employ mathematical models\cite{pastor,7393962,dyncontrolmei,Pare2020review}. In particular, epidemic models on temporal networks have become very popular in the last decade for their ability to capture the complex and time-varying patterns of human encounters, through which epidemic diseases are transmitted~\cite{Pare2018,zinoreview}. The analysis of such models leads to knowledge that can subsequently be used to inform control and intervention strategies \cite{7393962,dyncontrolmei,zinoreview,9089218}. During the past phases of the COVID-19 health crisis, the systems and control community has worked incessantly towards developing mathematical models to predict the spread of the pandemic \cite{giulia,DellaRossa2020,Calafiore2020}, to understand the effectiveness of non-pharmaceutical interventions~\cite{parino2021,Carli2020,Soltesz2020}, and to design vaccination campaigns~\cite{giordano2021vaccination,Parino2021vaccine, grundel2021}. Hence, mathematical modelling can be a key tool for studying the current challenges of the COVID-19 pandemic, specifically those related to the effectiveness of vaccination and the possibility to fully eradicate the disease.



To this aim, in this paper, we propose a network model for the spread of recurrent epidemic diseases, for instance those caused by fast mutating viruses (e.g. influenza viruses) or those that do not provide permanent immunity (e.g. COVID-19). In our model, we use a mechanism inspired by continuous-time activity-driven networks~\cite{zino1,zino2} to generate the time-varying pattern of physical encounters at close promixity, through which the disease is transmitted. The proposed mechanism takes into account the specific human behaviour in the form of a responsibility level, which represents the probability that an individual will choose to protect others and maintain distance to others, when only mild symptoms of the infection are apparent. To incorporate this human behaviour, we add an extra compartment to a standard susceptible--infected--susceptible (SIS) model~\cite{zinoreview}, to account for individuals that are mildly symptomatic and for those that have severe symptoms, and are thus restrained from having social interactions. In our model, we include three control measures which impact the spreading of the virus: i) \emph{vaccination}, where we have distinct factors for capturing its effect on transmission and on developing serious illness; ii) \emph{non-pharmaceutical interventions} (NPIs), such as mandatory face masks and physical distancing; and iii) \emph{testing campaigns}, which aim at identifying mildly symptomatic individuals and isolate them, thus reducing the contagions.

The main contribution of this paper is three-fold. First, we propose a parsimonious yet flexible epidemic model that incorporates human behaviour, the effects of vaccination, testing, and NPIs. Second, we perform a theoretical analysis of the proposed epidemic model, utilising a mean-field approach in the limit of large scale populations~\cite{virusspread}. Our analysis allows to compute the epidemic threshold and the epidemic prevalence if the outbreak trespasses the epidemic threshold and the disease becomes endemic. Third, we discuss a case study calibrated on the COVID-19 pandemic. Our findings elucidate the role of vaccination efforts on the spreading of fast transforming viruses, and suggest that although vaccines are typically highly effective in mitigating an epidemic outbreak by reducing the population with severe symptoms, they may make controlling local outbreaks more challenging. This potential double-edged sword effect may call for an increase of testing efforts and individuals' responsibility in populations with a high vaccination coverage, in order to avoid resurgent outbreaks and the associated implementation of NPIs to keep the disease under control. 

The organisation of the rest of this paper is as follows. The notation used throughout this paper is collected in Section II. In Section III, we formulate the model, and subsequently discuss its dynamics in Section IV. Next, we present our theoretical results in Section V, followed by a case study on COVID-19 in Section VI. The paper is concluded with Section VII, in which we discuss future research avenues.

\section{Notation}
\noindent We gather here the notation used in this paper. The set of strictly positive  integer, real non-negative, and strictly positive real numbers is denoted by $\mathbb{N}$, $\mathbb{R}_{\ge 0}$, and $\mathbb{R}_{> 0}$, respectively. Given a set $\mathcal S$, $|\mathcal S|$ denotes its cardinality. Given a function $x(t)$ with $t \in \mathbb{R}_{\ge 0}$, we define $x(t^+) = \lim_{s \searrow t} x(s)$, and $x(t^-) = \lim_{s \nearrow t} x(s)$.

\section{Model}

\noindent We consider $n$ individuals labelled as $\mathcal{V} = \{1, \hdots, n\}$ who interact on an undirected network $\left(\mathcal{V}, \mathcal{E}(t)\right)$, whose edge set evolves in continuous time $t\in \mathbb{R}_{\ge 0}$. If $(j,k)\in\mathcal{E}(t)$, then a physical encounter in close proximity is occurring between individuals $j$ and $k$ at time $t$. 

\subsection{Disease transmission model}\label{Epidemicmodel}

\noindent We extend the classical susceptible--infected--susceptible (SIS) model~\cite{zinoreview} by dividing infected individuals in two separate compartments for \emph{infectious} (i.e.\ infected individuals who are untested and mildly symptomatic) and \emph{quarantined} (due to the presence of severe symptoms or of a positive test) individuals, respectively. Hence, the health state of individual  $j \in \mathcal{V}$ at time $t\in\mathbb R_{\geq 0}$ is defined by the variable $X_j(t) \in \{\text{S}, \text{I}, \text{Q}\}$, denoting susceptible ($\text{S}$), infectious ($\text{I}$), and quarantined infected individuals ($\text{Q}$). 

Here, we make the assumption that quarantined individuals ($\text{Q}$) do not actively participate in the society because they are too unwell or due to prohibitions enforced by public health authorities. Infectious individuals with mild symptoms ($\text{I}$) can  participate, however. Specifically, we introduce a parameter $\sigma\in [0,1]$ that captures the individuals' level of \emph{responsibility}. If an individual $j$ is mildly symptomatic, they choose to protect others and maintain physical distance with probability $\sigma$; whereas with probability $1-\sigma$, $j$ disregards the symptoms and physically interacts with others in close proximity. We assume that each individual makes their decision independently of the others and of the past. For the sake of simplicity, we have assumed that all the individuals have the same level of responsibility, but heterogeneous responsibilities could easily be introduced in the model.

The time-varying network of physical encounters in close proximity is generated in a stochastic fashion, as detailed in the following. Inspired by continuous-time adaptive activity-driven networks~\cite{zino2}, we attach to each individual a Poisson clock with unit rate\footnote{A Poisson clock with rate $\zeta$ is a continuous-time stochastic process that ticks in a time-interval of length $\Delta t$ with probability $\zeta \Delta t+o(\Delta t)$.}, ticking independently of the others. If the clock associated with individual $j\in\mathcal V$ ticks at time $t\in\mathbb R_{\geq 0}$, then $j$ activates and has an interaction with another individual $k$, selected uniformly at random from $\mathcal{V}\setminus\{i\}$. Whether a susceptible individual has an encounter in close proximity, i.e.\ a physical encounter that allows for the transmission of the infection, depends on the state and responsibility level of the individuals involved in the interaction.
Specifically, if $j$ is susceptible at the moment of activation ($X_j(t^-)=\text{S}$) and makes a connection with an infected and mildly symptomatic individual ($X_k(t^-)=\text{I}$), then the encounter takes place in close proximity with probability equal to $1-\sigma$; while if $k$ is susceptible ($X_k(t^-)=\text{S}$), then a physical encounter in close proximity always takes place. If an individual $j$ who is infected and mildly symptomatic ($X_j(t^-)=\text{I}$) activates and selects a susceptible individual $k$ ($X_k(t^-)=\text{S}$), then the encounter occurs in close proximity with probability $1-\sigma$; if $j$ selects another mildly symptomatic individual $k$  ($X_k(t^-)=\text{I}$), then they interact in close proximity with probability $(1-\sigma)^2$ (i.e. if both the individuals disregard the symptoms). We assume that quarantined individuals ($X_k(t^-)=\text{Q}$) do not participate in any (risky) encounters. If a physical encounter in close proximity occurs, then the ephemeral edge $(j,k)$ is added to the edge set $\mathcal{E}(t)$, and immediately removed afterwards.

The health state of an individual $j \in \mathcal{V}$ evolves over time according to the following two mechanisms.
\begin{description}
\item [Contagion.] Infection transmission occurs through pairwise physical encounters at close proximity. Specifically, if a susceptible individual $j$ $(X_j(t^-) =  \text{S})$ has a physical encounter with an undetected infected individual $k$ ($X_k(t^-) =  \text{I})$ and $(j,k)\in\mathcal E(t)$, the infection is transmitted to $j$ with \textit{per-contact infection probability} $\lambda \in [0,1]$. We assume that physical contact with detected individuals does not lead to transmission of the infection, since they will use protections, as they are aware of the risk. If infected, individual $j$ will develop severe symptoms $(X_j(t^+) =  \text{Q})$ with probability $p_{\text{q}}\in [0,1]$, while $j$ will be mildly symptomatic $(X_j(t^+) = \text{I})$ with probability  $1-p_{\text{q}}$. 
\item [Recovery.] An infected individual  $(X_j(t^-) \in \{ \text{I},  \text{Q} \})$ spontaneously recovers   according to a Poisson clock with rate $\beta \in \mathbb{R}_{>0}$, becoming again susceptible to the disease $(X_j(t^+) =  \text{S})$.
\end{description}

\subsection{Control measures}\label{sec:control}

\noindent Within our modelling framework, we introduce three control measures that public health authorities can take in order to control the spread of an epidemic disease.

\begin{description}
\item[Vaccination.]  Vaccination reduces the probability of infection transmission and the probability of the development of severe symptoms. To model these effects, we introduce the parameters $\gamma_{\text{t}} \in [0,1]$ and  $\gamma_{\mathrm{q}} \in [0,1]$, respectively. Let $v \in [0,1]$ denote the vaccination coverage of the population, i.e.\ the probability that a generic individual is vaccinated. The effect of vaccination is implemented in the model in a mean-field fashion, by multiplying the per-contact infection probability  $\lambda$ and the probability of developing severe symptoms $p_{\text{q}}$ by the re-scaling factors $(1-\gamma_{\text{t}} v)$  and $(1-\gamma_{\mathrm{q}} v)$, respectively.
\item[NPIs.] To quantify the effect of NPIs like mandatory face masks and enforcement of physical distancing, we introduce a parameter $\eta\in[0,1]$ that models the effectiveness of the measures implemented in preventing transmission. Hence, their effect is modelled by multiplying the per-contact infection probability  $\lambda$ by the quantity $(1-\eta)$.


\item[Testing.] By offering free testing campaigns, individuals with mild symptoms $(X_j(t) =  \text{I})$ are triggered to get tested. To quantify the effect of this measure, we introduce a Poisson clock with rate $c_{\text{t}}\in \mathbb{R}_{\ge 0} $, representing the rate at which testing takes place. Thus, mildly symptomatic infectious individuals $(X_j(t^-) =  \text{I})$ are diagnosed  according to a Poisson clock with rate $c_{\text{t}}$. After diagnosis, the infectious individuals quarantine themselves $(X_j(t^+) =  \text{Q})$, whereby they avoid having (risky) encounters, until they recover. 
\end{description}

\noindent All the parameters of the model are summarised in Table~\ref{tab:parameter}.

\section{Dynamics}

\begin{table}
\begin{minipage}{0.5\linewidth}
		\centering
		\label{model}
		\includegraphics[width=0.63\textwidth]{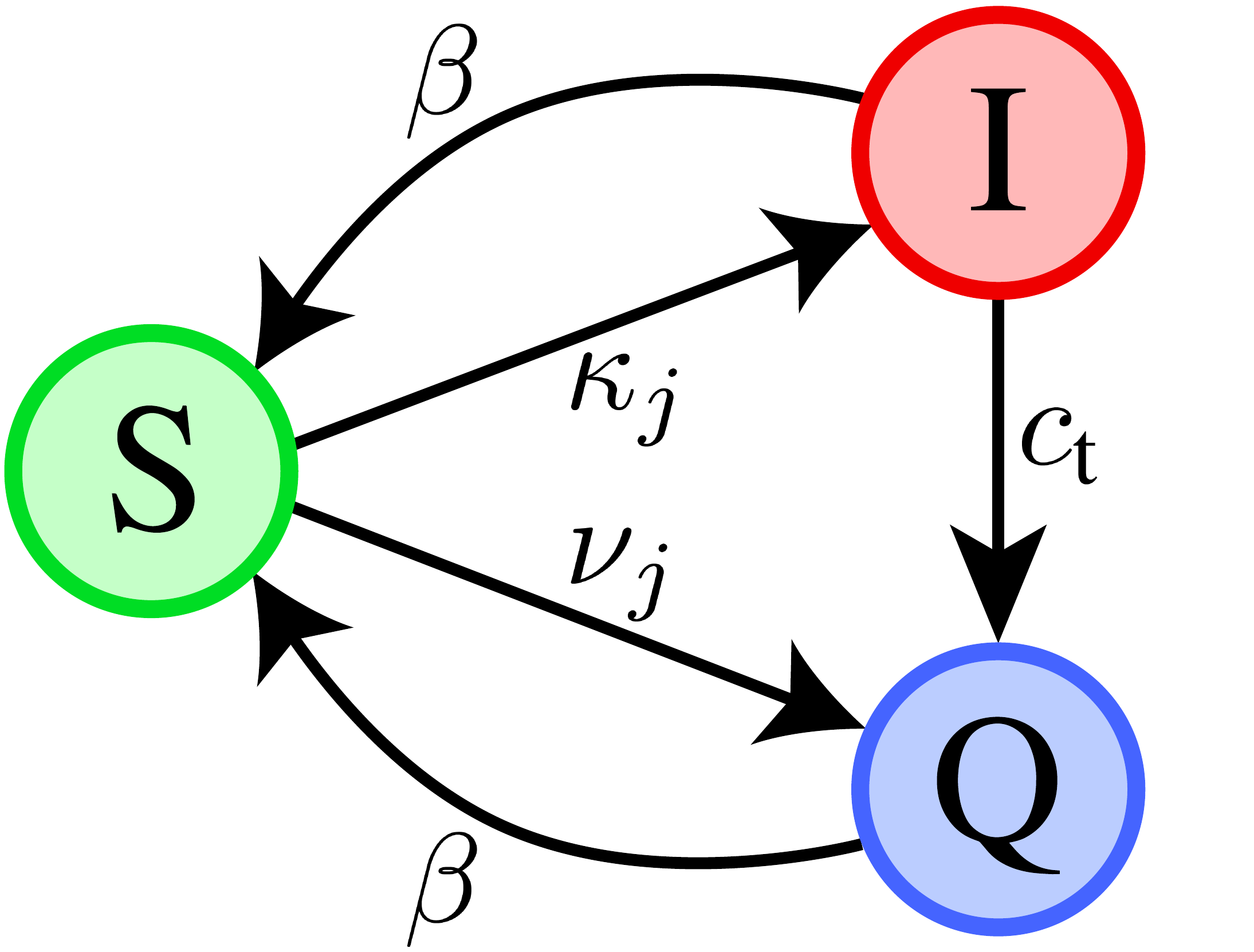}
		\captionof{figure}{State transitions of the epidemic model for $j\in \mathcal{V}$.}
	\end{minipage}	\hfill \begin{minipage}{0.5\linewidth}
		\caption{Model and Control Parameters.}
		\label{tab:parameter}
		\centering
		\resizebox{\textwidth}{!}{%
		\begin{tabular}{r| l}
$n\in\mathbb N$& population size\\
$t\in\mathbb R_{\geq 0}$& time\\
$X_j(t)\in\{\mathrm{S},\mathrm{I},\mathrm{Q}\}$& state of individual $j\in\mathcal V$ at time $t$\\
$\sigma\in[0,1]$&responsibility level of individuals\\
$\lambda\in[0,1]$& per-contact infection probability\\
$p_{\text{q}}\in[0,1]$& probability of severe illness\\
$\beta\in\mathbb R_{>0}$& recovery rate\\
$v\in[0,1]$& vaccination coverage\\
$\gamma_{\text{t}}\in[0,1]$&effectiveness of vaccine against transmission\\
$\gamma_{\mathrm{q}}\in[0,1]$&effectiveness of vaccine against severe illness\\
$\eta\in[0,1]$&effectiveness of NPIs\\
$c_{\text{t}}\in\mathbb R_{\geq 0}$&testing rate
\end{tabular}}
	\end{minipage}
\end{table}
\noindent All the mechanisms described in Section \ref{Epidemicmodel} are prompted by Poisson clocks, where each clock ticks independently of the others, so the $n$-dimensional vector $X(t) : = \left( X_1(t) \ X_2(t)  \cdots  X_n(t) \right)\in\{\text{S}, \text{I}, \text{Q}\}^{n}$, which represents the state of the population at time $t\in\mathbb R_{\geq 0}$, evolves according to a continuous-time Markov process~\cite{levin2006book}. As shown in Fig. 1, a generic individual can undergo five different state transitions, governed by the mechanisms of contagion, recovery, and testing, as described above. The three transitions triggered by recovery and testing are spontaneous processes, which occur with rate $\beta$ and $c_{\text{t}}$, as detailed in Sections \ref{Epidemicmodel} and \ref{sec:control}, respectively. The two triggered by contagion, instead, involve interactions between individuals and the corresponding rates are derived as follows.

If $X_j(t^-) = \text{S}$, then $j$ can become infected if they interact in close proximity with an infectious mildly symptomatic individual ($\text{I}$). Such an interaction occurs with rate equal to $2\frac{1-\sigma}{n-1}\cdot|\{k\in\mathcal V:X_k(t)=\text{I}\}|$, where the term $2$ comes from the fact that both $j$ and $k$ can initiate the encounter. When such an interaction occurs, $j$ becomes infected with probability equal to the per-contact infection probability, reduced by the presence of NPIs  and the effectiveness of vaccination against transmission. Then, $j$ becomes severely symptomatic with probability equal to $p_{\text{q}}(1-\gamma_{\mathrm{q}} v)$, otherwise they become infectious. Hence, we conclude that $X_j(t^-) = \text{S}$ becomes infectious ($X_j(t^+) = \text{I}$) according to a Poisson clock with rate \begin{align}
   \kappa_{j} := & 2 (1- \eta )\lambda\left(1-\gamma_{\text{t}} v\right)\left[1-p_{\text{q}}\left(1-\gamma_{\mathrm{q}} v \right)\right]  \dfrac{1-\sigma}{n-1}\sum\limits_{k\in\mathcal V:X_k= \text{I}  } 1,
\end{align}
while they become quarantined ($X_j(t^+) = \text{Q}$) according to a Poisson clock with rate 
 \begin{align}
   \nu_{j} := & 2 (1- \eta )\lambda\left(1-\gamma_{\text{t}} v\right)p_{\text{q}}\left(1-\gamma_{\mathrm{q}} v\right) \dfrac{1-\sigma}{n-1}\sum\limits_{k\in\mathcal V:X_k= \text{I}  } 1.
\end{align}

 
\noindent We begin our analysis by noting that the first row of the transition rate matrix $Q_j$ depends on the states of the other individuals. Hence, the individual dynamics cannot be decoupled, complicating the analysis for large-scale populations, where the state space dimension grows exponentially with $n$.  Thus, as is common practice in the study of these complex systems \cite{virusspread,9089218}, we employ a continuous-state deterministic mean-field relaxation of the system. In particular, instead of studying the actual evolution of the health state for each individual $j\in\mathcal V$, we study the evolution of the probabilities that the individual is susceptible, infectious, or quarantined, denoted as
\begin{equation*}
    s_{j} (t) : = \mathbb{P}\left[ X_j(t) = \text{S} \right], \quad
    i_{j} (t) : = \mathbb{P}\left[ X_j(t) = \text{I} \right], \quad \text{and} \quad
    q_{j} (t) : = \mathbb{P}\left[ X_j(t) = \text{Q} \right],
\end{equation*}
respectively. In the mean-field approximation, the evolution of these probabilities is governed by the system of ordinary differential equations $\left( \dot{s}_{j}  \  \dot{i}_{j} \  \dot{q}_{j} \right) = \left( s_{j} \  i_{j} \  q_{j} \right) Q_j$, or equivalently
\begin{align}\label{diffeq}
    \dot{s}_{j}  = &  - 2 s_{j} (1- \eta )\lambda\left(1-\gamma_{\text{t}} v\right)  \dfrac{1-\sigma}{n-1} \sum_{k \in \mathcal{V} \setminus \{j\} } i_{k}
      + \beta i_{j} + \beta q_{j}, \notag\\
    \dot{i}_{j}  = &  2 s_{j} (1- \eta )\lambda\left(1-\gamma_{\text{t}} v\right) \left[1-p_{\text{q}}\left(1-\gamma_{\mathrm{q}} v \right)\right]  \dfrac{1-\sigma}{n-1} \sum_{k \in \mathcal{V} \setminus \{j\} } i_{k} - (\beta + c_{\text{t}})  i_{j},  \\
  \dot{q}_{j}  = &  2 s_{j} (1- \eta )\lambda\left(1-\gamma_{\text{t}} v\right)p_{\text{q}}\left(1-\gamma_{\mathrm{q}} v \right)  \dfrac{1-\sigma}{n-1} \sum_{k \in \mathcal{V} \setminus \{j\} } i_{k} + c_{\text{t}} i_{j} - \beta  q_{j},   \notag
\end{align}
for all $j\in\mathcal V$.

\noindent Before moving to the actual mean-field analysis of the system, we define
\begin{equation}\label{propapprox}
  y_{\text{s}} := \dfrac{1}{n} \sum_{j \in \mathcal{V}} s_{j}, \quad y_{\text{i}} : = \dfrac{1}{n} \sum_{j \in \mathcal{V}} i_{j}, \quad y_{\text{q}} := \dfrac{1}{n} \sum_{j \in \mathcal{V}} q_{j},
\end{equation}
which is the average probability for a randomly selected individual to be in state $\text{S}$, $\text{I}$, and $\text{Q}$, respectively. For a sufficiently large enough $n$ and for any finite time-horizon, the fraction of individuals in a certain state can be arbitrarily closely approximated by \eqref{propapprox}, i.e.\ $ S(t) :=  \tfrac{1}{n}| \{j \in \mathcal{V}  :  X_j(t) =  \text{S}\} | \approx y_{\text{s}}$,  $I(t) :=  \tfrac{1}{n}| \{j \in \mathcal{V}  :   X_j(t) = \text{I}\} |\approx y_{\text{i}}$, and $
     Q(t) :=  \tfrac{1}{n}| \{j \in \mathcal{V}  :   X_j(t) =  \text{Q}\} | \approx y_{\text{q}}$ \cite{limittheo,zino2}, as illustrated in Fig. \ref{sim}. This supports the use of the mean-field approximation to study the emergent behaviour of the epidemic model for large populations employing the average probabilities defined in \eqref{propapprox} and the dynamics in \eqref{diffeq}. 

\begin{figure*}
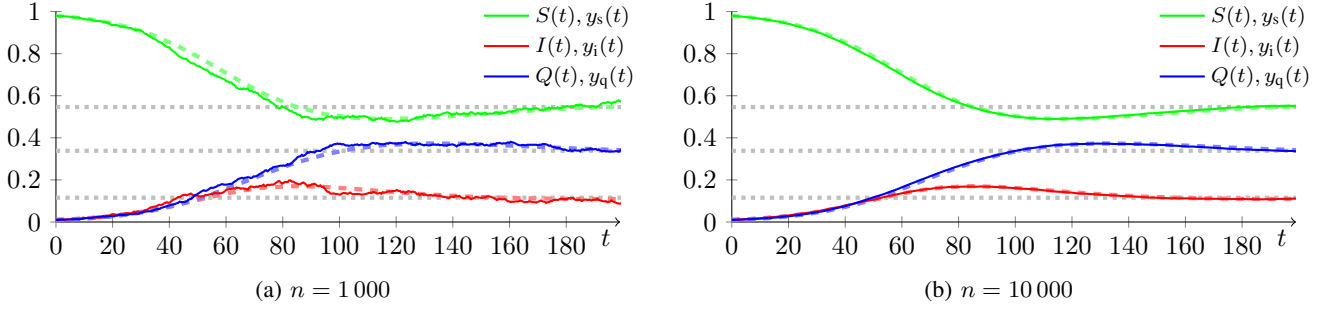
\vspace{4pt}
    \centering
\subfloat[$n=1\,000$]{\input{Figures/1k}}\quad\subfloat[$n=10\,000$]{\input{Figures/10k}}\\
    \caption{Simulations of the Markov process in \eqref{Q} (solid curves) and its deterministic mean-field approximation in \eqref{diffeq} (dashed curves) at the population level for increasing population sizes. The grey horizontal dotted lines represent the endemic equilibrium, computed in Theorem~\ref{theo2}. Common parameters are $\sigma=0.4$, $\lambda=0.2$, $p_{\mathrm{q}}=0.2$, $\beta=0.02$, $v=0.5$, $\gamma_{\mathrm{t}}=0.5$, $\gamma_{\mathrm{q}}=0.9$, $\eta=0.2$, and $c_{\mathrm{t}}=0.05$. }
    \label{sim}
\end{figure*}
\noindent For a generic $j^{\text{th}}$ entry of the Markov process $X(t)$, the transition rate matrix is given by
\begin{equation}\label{Q}
    Q_j  = \begin{bmatrix}
    -\kappa_{j}-\nu_{j} &   \kappa_{j}  &  \nu_{j}\\
    \beta & -\beta-c_{\text{t}}&  c_{\text{t}} \\
    \beta & 0 & -\beta
    \end{bmatrix},
\end{equation}
where the rows (columns) correspond to state $\text{S},  \text{I}$, and $ \text{Q}$, respectively. For any $p,q \in \{\text{S}, \text{I}, \text{Q}\}$ with $p \neq q$, 
\begin{equation*}
    \lim_{\Delta t\to 0}\frac{\mathbb{P}\left[X_j(t+ \Delta t) = q \,|\, X_j(t) = p \right]}{\Delta t} = (Q_j)_{pq} .
\end{equation*}


\section{Main Results}\label{section:mainresults}

\noindent In this section, we rigorously analyse the system in \eqref{diffeq}, to elucidate the role of vaccination and the level of responsibility in the epidemic spreading. 

Before presenting the main results, we will show that the system in \eqref{diffeq} is well-defined, i.e.\ that $\left( s_{j} \  i_{j} \  q_{j} \right)$ is a probability vector for all $j \in \mathcal{V}$ and $t \in \mathbb{R}_{\ge 0}$. 
\begin{lemma}\label{lemma1} For all $j \in \mathcal{V}$, the set
$\{ \left( s_{j}  \  i_{j} \  q_{j} \right) : s_{j}, i_{j}, q_{j} \ge 0, s_{j}+  i_{j}+ q_{j} = 1 \} $ is positive invariant under \eqref{diffeq}.
\end{lemma}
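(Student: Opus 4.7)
The plan is to verify the two standard conditions that together imply positive invariance of the probability simplex: (i) the sum $s_j+i_j+q_j$ is conserved along trajectories of \eqref{diffeq}, and (ii) the vector field points inward (weakly) on every face of the non-negative orthant $\mathbb{R}_{\ge 0}^3$.

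For (i), I would simply add the three equations in \eqref{diffeq}. The infection terms proportional to $\sum_{k \neq j} i_k$ appear with coefficient $-2s_j(1-\eta)\lambda(1-\gamma_{\text{t}}v)\frac{1-\sigma}{n-1}$ in $\dot{s}_j$ and with coefficients $2s_j(1-\eta)\lambda(1-\gamma_{\text{t}}v)\frac{1-\sigma}{n-1}[1-p_{\text{q}}(1-\gamma_{\text{q}}v)]$ and $2s_j(1-\eta)\lambda(1-\gamma_{\text{t}}v)\frac{1-\sigma}{n-1}p_{\text{q}}(1-\gamma_{\text{q}}v)$ in $\dot{i}_j$ and $\dot{q}_j$ respectively; these telescope to cancel exactly. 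The testing term $c_{\text{t}}i_j$ and the recovery terms $\beta i_j$, $\beta q_j$ also cancel pairwise between $\dot{i}_j+\dot{q}_j$ and $\dot{s}_j$. Hence $\dot{s}_j+\dot{i}_j+\dot{q}_j=0$, so if $s_j(0)+i_j(0)+q_j(0)=1$ then $s_j(t)+i_j(t)+q_j(t)=1$ for all $t\ge 0$.

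For (ii), I inspect the vector field on each face of the orthant. If $s_j=0$, then $\dot{s}_j=\beta i_j+\beta q_j\ge 0$ because $i_j,q_j\ge 0$ and $\beta>0$. If $i_j=0$, then $\dot{i}_j=2s_j(1-\eta)\lambda(1-\gamma_{\text{t}}v)[1-p_{\text{q}}(1-\gamma_{\text{q}}v)]\frac{1-\sigma}{n-1}\sum_{k\neq j}i_k\ge 0$, since every factor is non-negative by the parameter ranges in Table~\ref{tab:parameter} and $i_k\ge 0$ by assumption. If $q_j=0$, then $\dot{q}_j=2s_j(1-\eta)\lambda(1-\gamma_{\text{t}}v)p_{\text{q}}(1-\gamma_{\text{q}}v)\frac{1-\sigma}{n-1}\sum_{k\neq j}i_k+c_{\text{t}}i_j\ge 0$ by the same argument. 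By a standard Nagumo-type tangency argument (see, e.g., \cite{virusspread}), the non-negative orthant is positively invariant.

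Combining (i) and (ii), any trajectory starting in the simplex remains in the non-negative orthant and preserves the unit sum, hence remains in the simplex for all $t\ge 0$. I do not foresee a genuine obstacle: the only care needed is to keep track of the signs of the various coefficients and to argue that tangency at each face is sufficient because the system is globally Lipschitz on any bounded set, so solutions cannot escape via a corner.
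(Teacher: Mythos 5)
Your proposal is correct and follows essentially the same route as the paper: the paper's proof likewise observes that whenever one of $s_j,i_j,q_j$ vanishes its time-derivative under \eqref{diffeq} is non-negative, and that $\dot{s}_j+\dot{i}_j+\dot{q}_j=0$ so the unit sum is conserved. You merely spell out the coefficient cancellations and the Nagumo-type tangency step that the paper leaves implicit.
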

\begin{proof}
Observe that if one of the probabilities governed by \eqref{diffeq} is equal to zero, then its respective time-derivative is always non-negative. Furthermore, $ \dot{s}_{j}  +  \dot{i}_{j} +  \dot{q}_{j} =0 $, for all $j \in \mathcal{V}$, so $s_{j}+  i_{j}+ q_{j} = 1$ for all $t \in \mathbb{R}_{\ge 0}$. 
\end{proof}
\noindent  Note that Lemma~\ref{lemma1} also implies that only $2n$ of the $3n$ differential equations are linearly independent, as $s_j(t)+i_j(t)+q_j(t)=1$, for all $t\in\mathbb R_{\geq 0}$ and $j \in \mathcal{V}$, simplifying the analysis of the system.

The first thing we want to investigate is whether a local outbreak of the infection will escalate into a pandemic. Theorem \ref{theo1} presents the required conditions for (local) asymptotic stability of the disease-free equilibrium of system \eqref{diffeq}, which is coined as the \textit{epidemic threshold}. The threshold is presented as a critical value for the rate of testing $ c_{\mathrm{t}}$. If the testing rate is larger than $\bar{c}_{\mathrm{t}}$, as defined below, then the local outbreak is quickly extinguished; if not, it becomes endemic. 
     

\begin{theorem} \label{theo1} Consider the dynamical system in \eqref{diffeq}. Then, in the thermodynamic limit of large scale systems $n\to \infty$, the epidemic threshold is equal to
\begin{equation}\label{eq:threshold}
       \bar{c}_{\mathrm{t}} :=   2  (1- \eta )(1-\sigma)\lambda\left(1-\gamma_{\mathrm{t}} v\right) \left[1-p_{\mathrm{q}}\left(1-\gamma_{\mathrm{q}} v \right)\right] - \beta.
\end{equation}
In particular, if $ c_{\mathrm{t}} > \bar{c}_{\mathrm{t}}$, the disease-free equilibrium (with $y_{ \mathrm{i}}=y_{ \mathrm{q}}=0$) is locally asymptotically stable.
\end{theorem}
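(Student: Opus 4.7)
My plan is to prove local asymptotic stability of the disease-free equilibrium (DFE) via direct linearization of the reduced $2n$-dimensional system, reading off the threshold from the spectrum of the Jacobian.

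Using Lemma~\ref{lemma1} I first eliminate $s_j = 1 - i_j - q_j$ and work with the state $(i_1,q_1,\dots,i_n,q_n)$. The DFE corresponds to $i_j = q_j = 0$ for all $j\in\mathcal V$. At this equilibrium $s_j=1$, so the linearization is obtained simply by setting $s_j\equiv 1$ in the bilinear terms of \eqref{diffeq}. Writing
\begin{equation*}
\mu_1 := 2(1-\eta)\lambda(1-\gamma_{\mathrm t} v)\bigl[1-p_{\mathrm q}(1-\gamma_{\mathrm q} v)\bigr]\frac{1-\sigma}{n-1}, \qquad
\mu_2 := 2(1-\eta)\lambda(1-\gamma_{\mathrm t} v)p_{\mathrm q}(1-\gamma_{\mathrm q} v)\frac{1-\sigma}{n-1},
\end{equation*}
the linearized dynamics read $\dot i_j = \mu_1 \sum_{k\neq j} i_k - (\beta+c_{\mathrm t}) i_j$ and $\dot q_j = \mu_2 \sum_{k\neq j} i_k + c_{\mathrm t} i_j - \beta q_j$.

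Ordering coordinates as $(i_1,\dots,i_n,q_1,\dots,q_n)$, the Jacobian has the block-triangular form
\begin{equation*}
J = \begin{pmatrix} A & 0 \\ B & -\beta I_n \end{pmatrix},
\qquad
A = \mu_1 (\mathbf{1}\mathbf{1}^\top - I_n) - (\beta+c_{\mathrm t}) I_n,
\qquad
B = \mu_2(\mathbf{1}\mathbf{1}^\top - I_n) + c_{\mathrm t} I_n.
\end{equation*}
The lower-right block contributes the eigenvalue $-\beta < 0$ with multiplicity $n$, so stability is controlled entirely by $A$. Since $\mathbf{1}\mathbf{1}^\top$ has spectrum $\{n, 0, \ldots, 0\}$, the eigenvalues of $A$ are $\mu_1(n-1) - (\beta+c_{\mathrm t})$ (simple, eigenvector $\mathbf{1}$) and $-\mu_1 - (\beta+c_{\mathrm t})$ (multiplicity $n-1$, manifestly negative). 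Substituting $\mu_1(n-1) = (1-\sigma)\cdot 2(1-\eta)\lambda(1-\gamma_{\mathrm t} v)[1-p_{\mathrm q}(1-\gamma_{\mathrm q} v)]$, the dominant eigenvalue is negative precisely when
\begin{equation*}
c_{\mathrm t} > (1-\sigma)\cdot 2(1-\eta)\lambda(1-\gamma_{\mathrm t} v)\bigl[1-p_{\mathrm q}(1-\gamma_{\mathrm q} v)\bigr] - \beta = \bar c_{\mathrm t},
\end{equation*}
yielding the claimed threshold by the Hartman--Grobman theorem.

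There is no real technical obstacle here: the key observation is that the quantity $\mu_1(n-1)$ is $n$-independent, so the threshold is in fact exact for the mean-field system at every $n$; the thermodynamic limit is invoked only to legitimize the mean-field ODEs \eqref{diffeq} as a description of the original Markov process. The only point requiring a little care is recognizing the convenient block-triangular structure (the $i$-equations decouple from the $q$-variables in the linearization), which reduces the spectral problem to a rank-one perturbation of a scalar multiple of $I_n$.
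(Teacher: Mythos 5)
Your proof is correct, and it takes a genuinely different route from the paper's. The paper first collapses \eqref{diffeq} to the three macroscopic averages $(y_{\mathrm{s}},y_{\mathrm{i}},y_{\mathrm{q}})$ in \eqref{y}, uses $y_{\mathrm{s}}+y_{\mathrm{i}}+y_{\mathrm{q}}=1$ to pass to a planar system in $(y_{\mathrm{i}},y_{\mathrm{q}})$, and reads the threshold off a $2\times 2$ Jacobian whose eigenvalues are $-\beta$ and $\bar c_{\mathrm{t}}-c_{\mathrm{t}}$. You instead linearize the full $2n$-dimensional individual-level system and exploit the block lower-triangular structure together with the rank-one form $\mu_1(\mathbf{1}\mathbf{1}^\top-I_n)$ of the infection block; your dominant eigenvalue $\mu_1(n-1)-(\beta+c_{\mathrm{t}})$ coincides with the paper's nontrivial eigenvalue, and your computation is accurate. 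What your route buys is (i) the observation that the threshold for the ODE system \eqref{diffeq} is exact at every finite $n$, since $\mu_1(n-1)$ is $n$-independent, so the thermodynamic limit is needed only to justify the mean-field ODEs against the Markov process (the paper's macroscopic closure of \eqref{y}, by contrast, is itself only exact as $n\to\infty$); and (ii) the extra spectral information that heterogeneous perturbations (orthogonal to $\mathbf{1}$) decay at the faster rate $-\mu_1-(\beta+c_{\mathrm{t}})$, which the macroscopic reduction discards. What the paper's route buys is brevity: a $2\times 2$ Jacobian, and a planar system that it then reuses for the global analysis in Theorem~\ref{theo2} via Bendixson--Dulac. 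One cosmetic remark: for local asymptotic stability from a Hurwitz Jacobian the standard citation is Lyapunov's indirect (first) method rather than Hartman--Grobman, though the latter also suffices since the equilibrium is hyperbolic.
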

\begin{proof}
First, we immediately verify that the disease-free state of \eqref{diffeq}, that is,  $(s_j,i_{j},q_{j}) = (1,0,0)$ for all $j\in \mathcal{V}$, is always an equilibrium of the dynamics, as it nullifies the right-hand side of \eqref{diffeq}. To study its local stability, we consider a system made of the three macroscopic variables $y_{ \mathrm{s}}$, $y_{ \mathrm{i}}$, $y_{ \mathrm{q}}$, where its dynamics can be derived from \eqref{diffeq} and \eqref{propapprox} as
    \begin{align}\label{y}
    \dot{y}_{\mathrm{s}} = & - 2 (1- \eta )\lambda\left(1-\gamma_{\text{t}} v\right) (1-\sigma) y_{\mathrm{i}}  y_{\mathrm{s}}   + \beta y_{\mathrm{i}} + \beta y_{\mathrm{q}},\notag\\
    \dot{y}_{\mathrm{i}} = & 2 (1- \eta )\lambda\left(1-\gamma_{\text{t}} v\right) \left[1-p_{\text{q}}\left(1-\gamma_{\mathrm{q}} v \right)\right] (1-\sigma) y_{\mathrm{i}}  y_{\mathrm{s}} - (\beta+c_{\text{t}})y_{\mathrm{i}}, \\
    \dot{y}_{\mathrm{q}} = & 2 (1- \eta )\lambda\left(1-\gamma_{\text{t}} v\right) p_{\text{q}}\left(1-\gamma_{\mathrm{q}} v \right)  (1-\sigma) y_{\mathrm{i}}  y_{\mathrm{s}} +c_{\text{t}} y_{\mathrm{i}}-\beta y_{\mathrm{q}}.\notag
    \end{align}

\noindent Since $y_{\mathrm{s}}+y_{\mathrm{i}}+y_{\mathrm{q}} =1$ (as a consequence of Lemma~\ref{lemma1}), the system in \eqref{y} can be reduced to a planar system. For our analysis,  we take the second and third equation of \eqref{y}. Due the definition of the macroscopic variables, we observe that the disease-free equilibrium of \eqref{diffeq} is asymptotically stable if and only if the origin is asymptotically stable for the planar system $(y_{ \mathrm{i}},y_{ \mathrm{q}})$. We subsequently linearize \eqref{y} around the origin in the limit of large scale systems, $n \to \infty$, and obtain the following differential equations:
\begin{equation}\label{eq:macro}
    \begin{alignedat}{1}
  \dot{y}_{ \mathrm{i}} = &  2  (1- \eta )\lambda \left(1-\gamma_{\text{t}} v\right)\left[1-p_{\text{q}}\left(1-\gamma_{\mathrm{q}} v \right)\right](1-\sigma)y_{\text{i}}  - (\beta + c_{\text{t}})  y_{\text{i}},\\
    \dot{y}_{ \mathrm{q}} = & \left[ 2  (1- \eta )\lambda\left(1-\gamma_{\text{t}} v\right)p_{\text{q}}\left(1-\gamma_{\mathrm{q}} v \right)(1-\sigma)  +c_{\mathrm{t}} \right] y_{\text{i}}  - \beta   y_{\text{q}}.
\end{alignedat}
\end{equation}
The eigenvalues of the Jacobian matrix are given by $-\beta<0$, and 
\begin{equation}
       2  (1- \eta )(1-\sigma)\lambda\left(1-\gamma_{\mathrm{t}} v\right) \left[1-p_{\mathrm{q}}\left(1-\gamma_{\mathrm{q}} v \right)\right] - \beta- c_{\mathrm{t}},
\end{equation}
where the latter is negative if and only if $ c_{\mathrm{t}} > \bar{c}_{\mathrm{t}}$.
\end{proof}

 
 
\noindent  This theoretical result provides a rigorous tool to shed light on how the effectiveness of the vaccine and human behaviour impact the epidemic threshold of an infection, and thereby determine whether an epidemic outbreak could be easily controlled, or if higher testing efforts or more severe NPIs should be implemented. This will be discussed in the following remarks, and illustrated in the next section. 

\begin{remark}\label{rem:control} From the expression of the epidemic threshold $\bar c_{\mathrm{t}}$ in Theorem~\ref{theo1}, we observe that $ \frac{\partial\bar c_{\mathrm{t}}}{\partial \gamma_{\mathrm{t}}}<0$ and $\frac{\partial\bar c_{\mathrm{t}}}{\partial \gamma_{\mathrm{q}}}>0$. Hence, while high vaccine effectiveness against transmission favours the control of an epidemic outbreak, high effectiveness against severe illness hinders complete eradication. We can compute
\begin{equation}\label{eq:derivative1}
    \frac{\partial\bar c_{\mathrm{t}}}{\partial v}=2(1-\eta)(1-\sigma)\lambda(p_{\mathrm{q}}\gamma_{\mathrm{q}}-\gamma_{\mathrm{t}}(1-p_{\mathrm{q}})-2p_{\mathrm{q}}\gamma_{\mathrm{t}}\gamma_{\mathrm{q}}v),
\end{equation}
and find that whether an increase in the vaccination coverage facilitates complete eradication of an outbreak is non-trivial and depends on the probability of developing severe illness ($p_{\mathrm{q}}$), on the characteristics of the vaccine ($\gamma_{\mathrm{t}}$ and $\gamma_{\mathrm{q}}$), and on the fraction of people already vaccinated. Interestingly, at the beginning of the vaccination campaign, only vaccines with $\gamma_{\mathrm{t}}/\gamma_{\mathrm{q}}>p_{\mathrm{q}}/(1-p_{\mathrm{q}})$ favour the complete eradication of the disease. 
\end{remark}

\begin{remark}\label{rem:threshold}
Note that, in Theorem~\ref{theo1}, we have expressed the threshold in terms of the control effort that should be placed in testing in order to eradicate the outbreak. The same expression could be re-written, in order to highlight what the minimum level of individuals' responsibility or effectiveness of NPIs should be to obtain such a goal, as
\begin{align*}
       \bar\sigma&=1-  \frac{\beta+\bar{c}_{\mathrm{t}}}{2(1-\eta)\lambda\left(1-\gamma_{\mathrm{t}} v\right) \left[1-p_{\mathrm{q}}\left(1-\gamma_{\mathrm{q}} v \right)\right]},\\
 \bar\eta&=1-  \frac{\beta+\bar{c}_{\mathrm{t}}}{2(1-\sigma)\lambda\left(1-\gamma_{\mathrm{t}} v\right) \left[1-p_{\mathrm{q}}\left(1-\gamma_{\mathrm{q}} v \right)\right]},
 \end{align*}
respectively.
\end{remark}

\noindent We conclude this section by proving the following theorem, which characterises the global behaviour of the dynamical system \eqref{diffeq}. The theorem also provides an analytical expression of the endemic equilibrium of the system, when the disease-free equilibrium is unstable. The simulations in Fig.~\ref{sim} confirm our theoretical findings.

\begin{theorem} \label{theo2} Consider the dynamical system in \eqref{diffeq}, in the thermodynamic limit of large scale systems $n\to \infty$. If $ c_{\mathrm{t}} \geq \bar{c}_{\mathrm{t}}$, the system converges to the disease-free equilibrium $(y_{\mathrm{s}},y_{\mathrm{i}},y_{\mathrm{q}}) = (1,0,0)$. If $ c_{\mathrm{t}} < \bar{c}_{\mathrm{t}}$ and $y_{\mathrm{i}}(0)>0$, then the system converges to 
\begin{align}\label{equihoi}
 \Scale[1]{ y_{\mathrm{s}}^{\ast} = } & \Scale[1]{\dfrac{\beta + c_{\mathrm{t}}}{2(1- \eta )(1-\sigma)\lambda\left(1-\gamma_{\mathrm{t}} v\right)\left[1-p_{\mathrm{q}}\left(1-\gamma_{\mathrm{q}} v \right)\right] }},\notag \\
 \Scale[1]{  y_{\mathrm{i}}^{\ast} = } & \Scale[1]{\dfrac{\beta \left[1-p_{\mathrm{q}}\left(1-\gamma_{\mathrm{q}} v \right)\right]}{\beta + c_{\mathrm{t}}} - \dfrac{\beta}{2(1- \eta )(1-\sigma)\lambda\left(1-\gamma_{\mathrm{t}} v\right)}},\\
 y_{\mathrm{q}}^{\ast} = & \left(1 - \dfrac{\beta\left[1-p_{\mathrm{q}}\left(1-\gamma_{\mathrm{q}} v \right)\right]}{\beta + c_{\mathrm{t}}} \right) \left[1- \dfrac{\beta + c_{\mathrm{t}}}{2(1- \eta )(1-\sigma)\lambda\left(1-\gamma_{\mathrm{t}} v\right) \left[1-p_{\mathrm{q}}\left(1-\gamma_{\mathrm{q}} v \right)\right]} \right]. \notag
 \end{align}

\end{theorem}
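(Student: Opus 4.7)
The plan is to reduce the analysis to a planar projected system in $(y_{\mathrm{i}},y_{\mathrm{q}})$, available because $y_{\mathrm{s}}+y_{\mathrm{i}}+y_{\mathrm{q}}=1$ by Lemma~\ref{lemma1}. Substituting $y_{\mathrm{s}}=1-y_{\mathrm{i}}-y_{\mathrm{q}}$ in~\eqref{y}, the dynamics become
\begin{equation*}
\dot y_{\mathrm{i}}=y_{\mathrm{i}}\bigl[K(1-y_{\mathrm{i}}-y_{\mathrm{q}})-(\beta+c_{\mathrm{t}})\bigr],\qquad \dot y_{\mathrm{q}}=\tilde K\,y_{\mathrm{i}}(1-y_{\mathrm{i}}-y_{\mathrm{q}})+c_{\mathrm{t}}y_{\mathrm{i}}-\beta y_{\mathrm{q}},
\end{equation*}
with $K:=2(1-\eta)\lambda(1-\gamma_{\mathrm{t}}v)(1-\sigma)[1-p_{\mathrm{q}}(1-\gamma_{\mathrm{q}}v)]=\beta+\bar c_{\mathrm{t}}$ and $\tilde K:=2(1-\eta)\lambda(1-\gamma_{\mathrm{t}}v)(1-\sigma)p_{\mathrm{q}}(1-\gamma_{\mathrm{q}}v)$. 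The triangle $\Omega=\{(y_{\mathrm{i}},y_{\mathrm{q}}):y_{\mathrm{i}},y_{\mathrm{q}}\ge 0,\ y_{\mathrm{i}}+y_{\mathrm{q}}\le 1\}$ is compact and positively invariant by Lemma~\ref{lemma1}.

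\emph{Case $c_{\mathrm{t}}\ge\bar c_{\mathrm{t}}$.} I would bound $\dot y_{\mathrm{i}}\le y_{\mathrm{i}}(K-\beta-c_{\mathrm{t}})=y_{\mathrm{i}}(\bar c_{\mathrm{t}}-c_{\mathrm{t}})\le 0$. If the inequality is strict, $y_{\mathrm{i}}$ decays exponentially; in the borderline case, apply LaSalle's invariance principle to $V=y_{\mathrm{i}}$ on $\Omega$, noting that $\dot V=0$ only on $\{y_{\mathrm{i}}=0\}\cup\{y_{\mathrm{s}}=1\}$ and that the largest invariant subset of this set is $\{(0,0)\}$ (since $\dot y_{\mathrm{q}}=-\beta y_{\mathrm{q}}$ on $\{y_{\mathrm{i}}=0\}$). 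Once $y_{\mathrm{i}}(t)\to 0$, the equation $\dot y_{\mathrm{q}}=-\beta y_{\mathrm{q}}+o(1)$ drives $y_{\mathrm{q}}\to 0$ by linear asymptotic behaviour, yielding convergence to the disease-free equilibrium.

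\emph{Case $c_{\mathrm{t}}<\bar c_{\mathrm{t}}$.} Solving $\dot y_{\mathrm{i}}=0$ under $y_{\mathrm{i}}>0$ gives $y_{\mathrm{s}}^{*}=(\beta+c_{\mathrm{t}})/K$; combining $\dot y_{\mathrm{q}}=0$ with the constraint $y_{\mathrm{s}}+y_{\mathrm{i}}+y_{\mathrm{q}}=1$ then identifies $(y_{\mathrm{i}}^{*},y_{\mathrm{q}}^{*})$ as the unique interior equilibrium, both positive precisely because $y_{\mathrm{s}}^{*}<1$. I would rule out periodic orbits in $\{y_{\mathrm{i}}>0\}$ via the Bendixson--Dulac criterion with multiplier $\varphi(y_{\mathrm{i}},y_{\mathrm{q}})=1/y_{\mathrm{i}}$: a short computation gives
\begin{equation*}
\partial_{y_{\mathrm{i}}}(\varphi\dot y_{\mathrm{i}})+\partial_{y_{\mathrm{q}}}(\varphi\dot y_{\mathrm{q}})=-K-\tilde K-\beta/y_{\mathrm{i}}<0
\end{equation*}
throughout the simply-connected interior, excluding closed orbits. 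A direct Jacobian calculation at $(y_{\mathrm{i}}^{*},y_{\mathrm{q}}^{*})$ produces trace $-(K+\tilde K)y_{\mathrm{i}}^{*}-\beta<0$ and determinant $Ky_{\mathrm{i}}^{*}[\tilde K y_{\mathrm{s}}^{*}+\beta+c_{\mathrm{t}}]>0$, so the endemic equilibrium is locally asymptotically stable. The Poincar\'e--Bendixson theorem on $\Omega$, together with the absence of periodic orbits and the local stability of $(y_{\mathrm{i}}^{*},y_{\mathrm{q}}^{*})$ (which rules out heteroclinic polycycles through it), forces every $\omega$-limit set to be one of the two equilibria. For $y_{\mathrm{i}}(0)>0$, the remaining step is to exclude convergence to $(0,0)$: the linearisation~\eqref{eq:macro} gives eigenvalues $\bar c_{\mathrm{t}}-c_{\mathrm{t}}>0$ and $-\beta<0$, so the origin is a saddle whose one-dimensional stable manifold is tangent to the $y_{\mathrm{q}}$-axis, which is itself invariant and entirely converges to the origin, hence \emph{is} the global stable manifold; the trajectory, satisfying $y_{\mathrm{i}}(t)=y_{\mathrm{i}}(0)\exp\!\int_0^t[Ky_{\mathrm{s}}(s)-(\beta+c_{\mathrm{t}})]\,\mathrm{d}s>0$, never reaches it. (Equivalently, if $y_{\mathrm{i}},y_{\mathrm{q}}\to 0$ then $y_{\mathrm{s}}\to 1$, so eventually $Ky_{\mathrm{s}}-(\beta+c_{\mathrm{t}})\ge(\bar c_{\mathrm{t}}-c_{\mathrm{t}})/2>0$, contradicting $y_{\mathrm{i}}\to 0$.) Therefore the trajectory converges to $(y_{\mathrm{i}}^{*},y_{\mathrm{q}}^{*})$, with $y_{\mathrm{s}}^{*}=1-y_{\mathrm{i}}^{*}-y_{\mathrm{q}}^{*}$ matching the stated formula.

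The main obstacle is promoting local to global convergence in the endemic regime. The creative ingredient is the choice of Dulac multiplier $\varphi=1/y_{\mathrm{i}}$, after which the argument becomes a textbook combination of the Bendixson--Dulac and Poincar\'e--Bendixson theorems with the saddle-stable-manifold structure at the origin; all remaining calculations are routine algebra verifying the equilibrium formula.
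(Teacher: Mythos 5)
Your proposal is correct and follows essentially the same route as the paper: reduce to a planar system via $y_{\mathrm{s}}+y_{\mathrm{i}}+y_{\mathrm{q}}=1$, exclude periodic orbits by Bendixson--Dulac, and conclude from the equilibrium structure via Poincar\'e--Bendixson. The only differences are cosmetic or are details you fill in that the paper leaves implicit: you project onto $(y_{\mathrm{i}},y_{\mathrm{q}})$ with multiplier $1/y_{\mathrm{i}}$ rather than onto $(y_{\mathrm{s}},y_{\mathrm{i}})$ with $(y_{\mathrm{s}}y_{\mathrm{i}})^{-1}$, and you make explicit the Jacobian check at the endemic equilibrium, the exclusion of the saddle's stable manifold, and the LaSalle argument for the critical case $c_{\mathrm{t}}=\bar c_{\mathrm{t}}$.
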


\begin{proof}
 For this analysis, we use again the fact that $y_{\mathrm{s}}+y_{\mathrm{i}}+y_{\mathrm{q}}=1$ (Lemma~\ref{lemma1}) and we take the first and second equation of \eqref{y}. Note that the domain is bounded (Lemma \ref{lemma1}), so we can employ the Bendixson-Dulac theorem \cite{ricardo2020modern}. For $\phi(y_{\mathrm{s}}, y_{\mathrm{i}})=(y_{\mathrm{s}}y_{\mathrm{i}})^{-1}$ we find
\begin{equation}
    \dfrac{\partial (\phi \dot{y}_{\textrm{s}})}{\partial y_{\textrm{s}}} +  \dfrac{\partial (\phi \dot{y}_{\textrm{i}})}{\partial y_{\textrm{i}}} = - \dfrac{\beta}{y_{\textrm{s}}^2} \left(1+ \dfrac{1- y_{\textrm{i}}}{ y_{\textrm{i}}} \right) <0, 
\end{equation}

\noindent so there do not exist periodic solutions of \eqref{y}. If $c_{\mathrm{t}} > \bar{c}_{\mathrm{t}}$, then the disease-free equilibrium $(y_{\mathrm{s}},y_{\mathrm{i}},y_{\mathrm{q}}) = (1,0,0)$ is the unique equilibrium of \eqref{y} and therefore globally asymptotically stable. If $c_{\mathrm{t}} < \bar{c}_{\mathrm{t}}$, then one can easily verify that there exists one other equilibrium given by \eqref{equihoi}, which we term endemic equilibrium. Since there do not exist periodic solutions and the disease-free equilibrium is unstable for $c_{\mathrm{t}} < \bar{c}_{\mathrm{t}}$, the endemic equilibrium is asymptotically stable. Note that for $c_{\mathrm{t}} = \bar{c}_{\mathrm{t}}$, \eqref{equihoi} is equal to the disease-free equilibrium, hence, we conclude that for $c_{\mathrm{t}} = \bar{c}_{\mathrm{t}}$, the disease-free equilibrium is asymptotically stable.
\end{proof}

\begin{remark}\label{rem:mitigate}
Note that in the endemic equilibrium \eqref{equihoi}, the fraction of seriously ill individuals in the population can be computed as
\begin{equation}
\xi: =(1-y_{\mathrm{s}}^*)p_{\mathrm{q}}\left(1-\gamma_{\mathrm{q}} v \right).
\end{equation}
Using the explicit expression of $y_{\mathrm{s}}^*$, we can compute
   \begin{align*} \displaystyle\frac{\partial \xi}{\partial v}=&\displaystyle-\frac{\partial y_{\mathrm{s}}^*}{\partial v}p_{\mathrm{q}}(1-\gamma_{\mathrm{q}} v)-(1-y_{\mathrm{s}}^*)p_{\mathrm{q}}\gamma_{\mathrm{q}}\\
   =&\displaystyle\frac{\beta+c_{\mathrm{t}}}{(\beta+\bar c_{\mathrm{t}})^2}\frac{\partial \bar c_{\mathrm{t}}}{\partial v}p_{\mathrm{q}}(1-\gamma_{\mathrm{q}} v)-\left(1-\frac{\beta+c_{\mathrm{t}}}{\beta+\bar c_{\mathrm{t}}}\right)p_{\mathrm{q}}\gamma_{\mathrm{q}},
   \end{align*}
where $\bar{c}_{\mathrm{t}}$ is defined in \eqref{eq:threshold} and $\frac{\partial \bar c_{\mathrm{t}}}{\partial v}$ is computed in \eqref{eq:derivative1}. Hence, vaccination seems to be beneficial in reducing the fraction of people with severe illness for the set of parameters that satisfy $\displaystyle\frac{\partial \xi}{\partial v}>0$, which includes all the cases in which the vaccine facilitates the control of the epidemic (Remark~\ref{rem:control}). 
\end{remark}

\noindent Our theoretical findings highlight that, for a wide range of parameters (Remark~\ref{rem:mitigate}), vaccination is a powerful control measure for the mitigation of an epidemic outbreak, as it is able to successfully reduce the number of seriously infected individuals in the population. However, Remark~\ref{rem:control} shows that, in some of these scenarios, vaccination could act as a double-edged sword, whereby, despite reducing severe infections, it could hinder complete eradication of the disease. In these scenarios,  the epidemic disease tends to become endemic, unless stronger control measures are enacted (NPIs or efficient testing practices) or people increase their responsibility level, restraining themselves from having interactions with others if they have symptoms, as indicated by Theorem~\ref{theo1} and Remark~\ref{rem:threshold}. In the next section, we will illustrate this non-trivial effect of vaccination by means of a case study, inspired by the ongoing COVID-19 pandemic and vaccination campaign.

 
\section{Case Study and Discussion}

\begin{figure*}\vspace{4pt}
\centering
\subfloat[Prevalence of severe illness $\xi$]{\label{fig:gamma1}\includegraphics[]{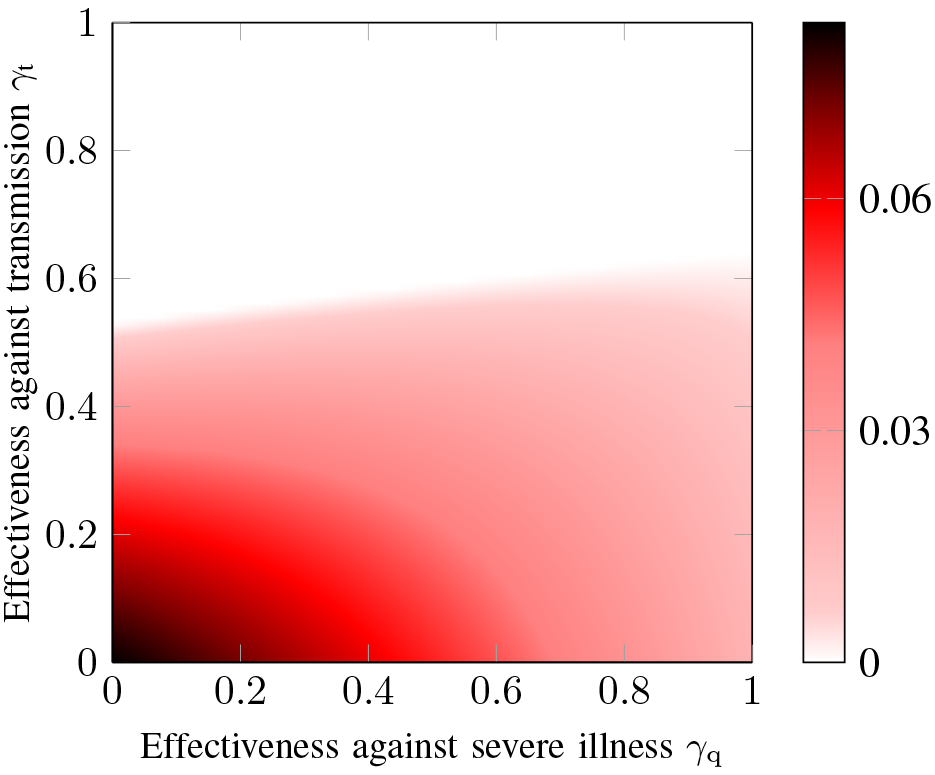}}\quad
\subfloat[Epidemic threshold $\bar{c}_{\mathrm{t}}$]{\label{fig:gamma2}\includegraphics[]{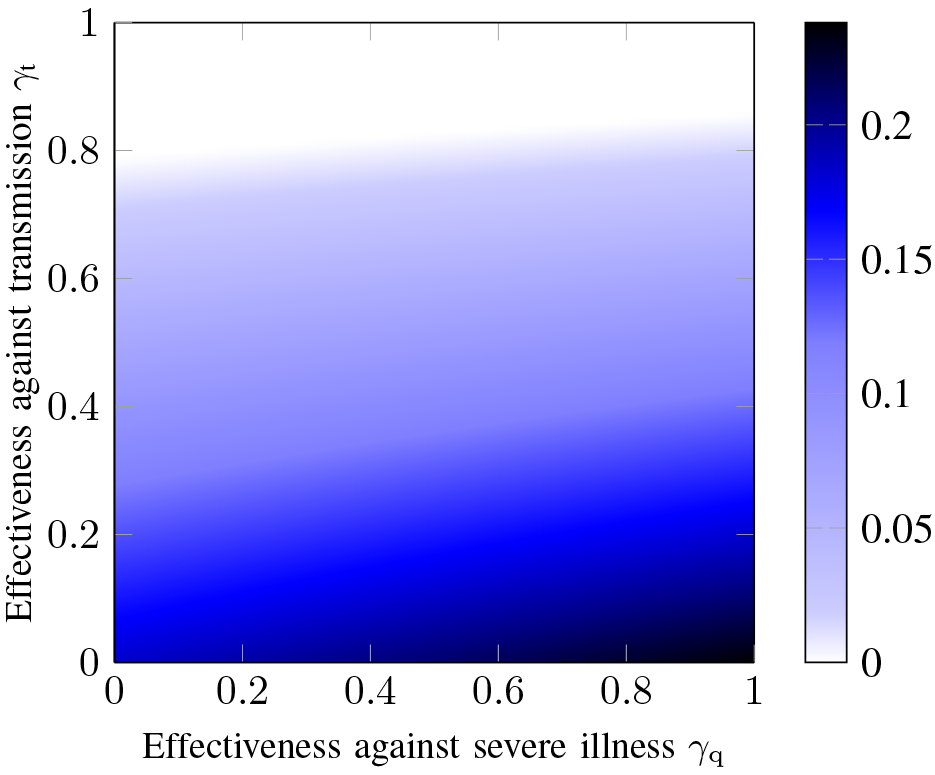}}
\caption{(a) Prevalence of severe illness $\xi$  and (b) epidemic threshold for different levels of the effectiveness against severe illness
$\gamma_{\mathrm{q}}$ and the effectiveness against transmission $\gamma_{\text{t}}$. Common parameters are $\sigma=0.4$, $v=0.821$, $\eta =0.19$, $c_{\text{t}}=0.06$ $\lambda = 0.36$, $p_{\text{q}}=0.19$, and $\beta=0.1$.}\label{fig:gamma}
\end{figure*}

\noindent Here, we present a case study to provide insights into the theoretical results presented in Section \ref{section:mainresults}. In particular, motivated by the ongoing COVID-19 pandemic and global vaccination campaign, we calibrate our model to reflect some characteristics of COVID-19 and of the situation in the Netherlands as of early November 2021. In \cite{phucharoen2020characteristics}, it was estimated that the infection probability of a contact is 36\%, which was reduced to 29\% after governmental NPIs. Thus, we take $\lambda=0.36$, and derive from $\lambda(1-\eta) = 0.29$ that $\eta=0.19$. According to clinical data, 81\% of COVID-19 cases are classified as mild \cite{ChineseCDC}, so we set $p_{\text{q}}=0.19$. Note that, for this choice of parameters, the vaccine seems to be almost always beneficial in mitigating the disease (by checking the condition in Remark~\ref{rem:mitigate}), but, depending on the characteristics $\gamma_{\text{t}}$ and $\gamma_{\text{q}}$ of the vaccine in question, it may hinder the possibility of fully eradicating the disease without increasing NPIs or testing, or relying on a more responsible population. Vaccine efficacy is dependent on the type of vaccine, the time passed from its administration, and the virus strain, and there is not yet a common consensus in the scientific community on reliable estimations for the parameters $\gamma_{\text{t}}$ and $\gamma_{\text{q}}$.

\noindent In view of these high levels of uncertainty, we use our theoretical findings to examine the effect of the vaccine effectiveness parameters $\gamma_{\text{t}}$ and $\gamma_{\text{q}}$ on the prevalence of severe illness (Fig. \ref{fig:gamma1}) and on the epidemic threshold (Fig. \ref{fig:gamma2}). From Fig. \ref{fig:gamma}, we observe that although a vaccination coverage of $82.1\%$ (that is, the coverage in the Netherlands as of November 5, 2021~\cite{Mathieu2021database,rivmvaccinecoverage}) has a strongly beneficial effect on the reduction of the prevalence of seriously ill individuals (Fig.~\ref{fig:gamma1}), the epidemic threshold is monotonically increasing in the effectiveness against severe illness $\gamma_{\mathrm{q}}$ (Fig.~\ref{fig:gamma2}). This implies that, unless testing is performed at a very high rate, the current vaccination coverage in the Netherlands as of early November 20201 is not enough to stop local outbreaks from becoming endemic and completely eradicate the disease. In other words, despite vaccines being highly effective in the prevention of severe illness, thereby reducing the pressure on hospitals, they may complicate the control of local outbreaks. This is a consequence of the increased social activity of infected individuals, who are less likely to develop severe symptoms, and thus are less likely to be detected and isolated.

Finally, we consider a specific scenario calibrated on the BNT162b2 mRNA vaccine (Comirnaty by Pfizer-BioNTech), which is the most used vaccine in the Netherlands~\cite{Mathieu2021database}. We consider a scenario in which we set $\gamma_{\text{t}}=0.65$, as \cite{de2021vaccine} indicated a reduction of the risk of transmission of 65\%  for Comirnaty. According to clinical studies~\cite{dagan2021bnt162b2}, a full vaccination status reduces the probability of severe symptom development with  $92\%$, so we set   $\gamma_{\text{q}}=0.92$. In Fig. \ref{fig4}, we show that the epidemic threshold and the fraction of seriously ill population are both monotonically decreasing in the vaccination coverage $v$ and the responsibility level $\sigma$. However, compared to the responsibility level, vaccination has a lower impact, and does not lead to a complete eradication of the disease at  low responsibility levels (Fig. \ref{fig:severe}), suggesting that responsibility is key to achieve eradication of the disease. Some recent clinical studies suggest that the efficacy of Comirnaty ---in particular against transmission--- may quickly wane over the course of a few months after the second shot, calling for the need of a booster shot campaign~\cite{Eyre2021}. While there is still not a consensus in the scientific community about waning immunity and its timing, we perform some further analysis by assuming that, after some months, the effectiveness against transmission reduces to a fourth of the nominal value, that is, $\gamma_{\text{t}}=0.165$.  Our findings suggest that although a high vaccination coverage is still effective in keeping the fraction of seriously ill population under control (Fig.~\ref{fig:severe2}), it is not beneficial in facilitating the complete eradication of the disease, since the degree of vaccination coverage has no effect on the epidemic threshold (Fig.~\ref{fig:threshold2}). 

 \begin{figure*}\vspace{4pt}
\centering
\subfloat[Seriously ill $\xi$]{\label{fig:severe}\includegraphics[]{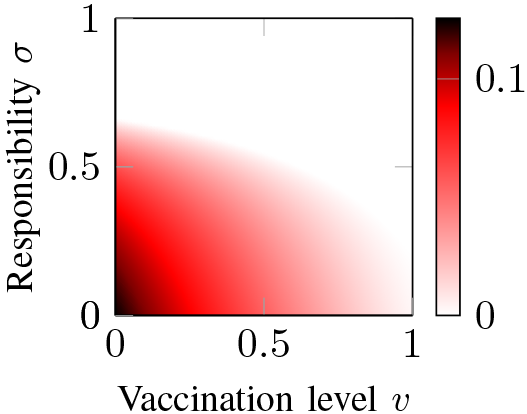}}\subfloat[Threshold $\bar{c}_{\mathrm{t}}$]{\label{fig:threshold}\includegraphics[]{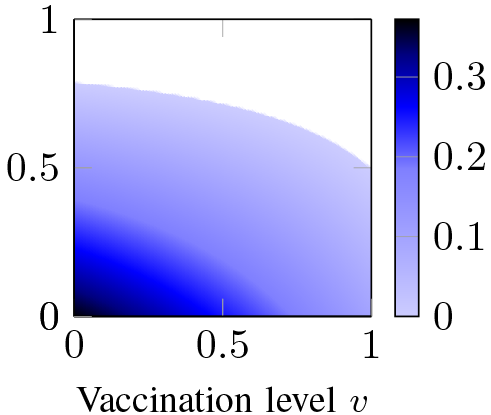}}\ \subfloat[Seriously ill $\xi$]{\label{fig:severe2}\includegraphics[]{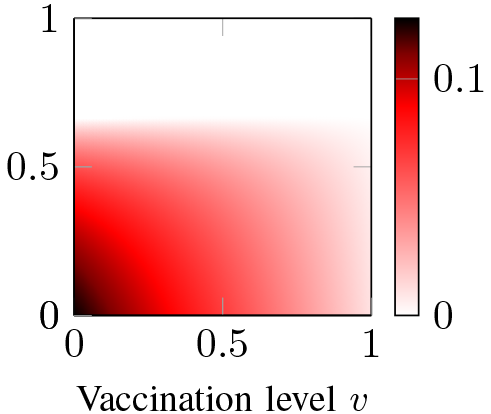}}\subfloat[Threshold $\bar{c}_{\mathrm{t}}$]{\label{fig:threshold2}\includegraphics[]{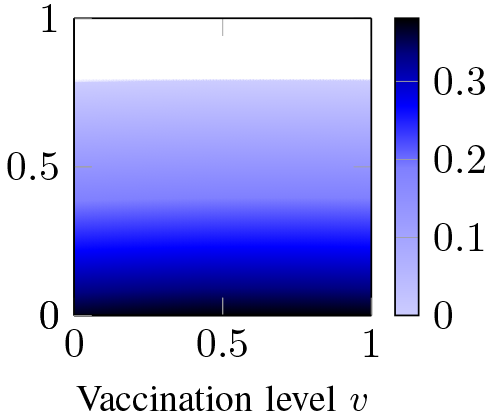}}
\caption{(a,c) Epidemic threshold and (b,d) fraction of seriously ill population $\xi$ for different levels of vaccination $v$ and responsibility $\sigma$. In (a-b), $\gamma_{\text{t}} = 0.65$; in (c-d), $\gamma_{\text{t}} = 0.165$. Common parameters are  $\gamma_{\mathrm{q}} = 0.92$, $\eta =0.19$, $\lambda = 0.36$, $p_{\text{q}}=0.19$, $c_{\text{t}}=0.06$, and $\beta=0.1$.
}\label{fig4}
\end{figure*}

Before concluding this paper, we would like to remark that our results are derived with a simple epidemic model that does not capture important features of COVID-19 like latency period and (temporary) immunisation after recovery. Hence, the case study presented here should be meant as a preliminary qualitative study of our theoretical results inspired by a real-world scenario and a motivating example to perform future research with more complex epidemic models, such as those typically used for COVID-19~\cite{giulia,parino2021,Carli2020,DellaRossa2020,Calafiore2020}, toward deriving rigorous quantitative conclusions.


\section{Conclusion}
\noindent We proposed a mathematical model for the spreading of recurrent epidemic infections in a population that is partially vaccinated, and studied the impact of vaccination campaigns on the epidemic prevalence and the  control of local outbreaks. Employing a mean-field approximation of the system, in the limit of large-scale populations, we derived the epidemic threshold and an expression for the endemic equilibrium. Our theoretical results indicate that although vaccines have a beneficial effect on the prevalence of individuals with severe symptoms, it may impede the control of local outbreaks. To manage such outbreaks, the combination of responsible behaviour of individuals and effective testing practices is key. In this paper, we kept the level of responsibility constant for all individuals.

The promising preliminary results presented in this paper pave the way for several avenues of future research. First, we plan to incorporate further compartments, to capture more realistic features of epidemics such as (temporary) immunity after recovery and latency periods, similar to the model proposed to study COVID-19~\cite{giulia,parino2021,Carli2020,DellaRossa2020,Calafiore2020}. Second, similar to~\cite{Kat,ye2021game}, we aim at encapsulating in the model a game-theoretic decision making process, whereby individuals make the decision to protect others based on a combination of external factors. As another direction, one may consider an extension of the proposed model by the implementation of a game-theoretic approach for the adoption of vaccination, as is done in \cite{fu2011imitation}.
Finally, one may consider interactions between multi-populations with a different vaccination coverage, and study the effect of a community with a very low vaccination rate (e.g.\ the Bible Belt in the Netherlands \cite{rivmvaccinecoverage}) on epidemic spreading.

\ifCLASSOPTIONcaptionsoff
  \newpage
\fi



%

\bibliographystyle{ieeetr}
\bibliography{bib}

%




\end{document}